\pdfoutput=1
\documentclass[11pt]{article}

\usepackage[top=3.2cm,bottom=2.5cm]{geometry}
\usepackage{graphicx}
\usepackage{mathptmx}      
%
\usepackage{amsmath,amssymb,amsfonts,amsthm}
\usepackage{stmaryrd}
\usepackage{eucal}
\usepackage{xcolor}
\usepackage{mathtools}
\usepackage{microtype}
\usepackage{enumerate}
\usepackage{url}
\newcommand{\change}[1]{#1}

\newtheorem{theorem}{\bf Theorem}

\newtheorem{example}{\bf Example}

%
%
%


\newcommand{\R}{{\mathbb{R}}}

\newcommand{\bu}{{\mathbf{u}}}
\newcommand{\bv}{{\mathbf{v}}}
\newcommand{\hbu}{{\hat{\bu}}}

\newcommand{\bA}{{\mathbf{A}}}
\newcommand{\bI}{{\mathbf{I}}}
\newcommand{\bb}{{\mathbf{b}}}
\newcommand{\bH}{{\mathbf{H}}}
\newcommand{\bL}{{\mathbf{L}}}

\newcommand{\abs}[1]{{\left\lvert #1 \right\rvert}}
\newcommand{\mult}{{R}}
\newcommand{\domain}{{\mathcal{D}}}
\DeclareMathOperator{\rank}{rank}


\begin{document}

\title{On low-rank approximability of solutions to high-dimensional operator equations and eigenvalue problems}

\author{ Daniel Kressner\thanks{ANCHP, MATHICSE, EPF Lausanne, Switzerland. Email: {\tt daniel.kressner@epfl.ch}} \and 
    Andr\'e Uschmajew\thanks{Hausdorff Center for Mathematics \& Institute for Numerical Simulation,
University of Bonn, 53115 Bonn, Germany.
E-mail: {\tt uschmajew@ins.uni-bonn.de}}
}

\date{}

\maketitle

\begin{abstract}
Low-rank tensor approximation techniques attempt to mitigate the overwhelming
complexity of linear algebra tasks arising from high-dimensional applications.
In this work, we study the low-rank approximability of solutions to linear systems
and eigenvalue problems on Hilbert spaces. Although this question is central to the success of all existing
solvers based on low-rank tensor techniques, very few of the results available so far allow to draw meaningful conclusions for higher dimensions.
In this work, we develop a constructive framework to study low-rank approximability.
One major assumption is that the involved linear operator admits a low-rank representation with respect to the chosen
tensor format, a property that is known to hold in a number of applications. 
Simple conditions, which are shown to hold for a fairly general problem class, guarantee that our  derived low-rank truncation error estimates do not deteriorate as the dimensionality increases.

\emph{Keywords:} Low-rank tensor approximation; High-dimensional equations; Singular value decay; Richardson iteration

\emph{Mathematics Subject Classification:} 15A18; 15A69; 41A25; 41A46; 41A63; 65J10
\end{abstract}

\smallskip

\section{Introduction}

The past few years have seen a growing activity in applying low-rank tensor techniques to the
approximate solution of high-dimensional problems, see, e.g.,~\cite{Grasedyck2013a,Hackbusch2012} for survey.
The success of these techniques crucially depends on the ability to approximate the object of interest
by a tensor of low rank with respect to the chosen tensor format. Although this property has been frequently
confirmed in practice, there is little theoretical insight into this matter so far. 

An important special case of the problems considered in this work are matrix equations
of the form $\bA(U) = B$ for a linear operator $\bA: \R^{M \times N} \to \R^{M \times N}$.
Clearly, any such operator can be written in the form 
\[
 \bA(U) = A^{(1)}_1 U A^{(2)}_1 + A^{(1)}_2 U A^{(2)}_2 + \cdots + A^{(1)}_{r_\bA} U A^{(2)}_{r_\bA}, \qquad A^{(1)}_i \in \R^{M\times M}, \quad 
 A^{(2)}_i \in \R^{N\times N}
\]
for some $r_\bA \le MN$. For $r_\bA = 1$ and invertible 
matrices $A^{(1)}_1\!,\ A^{(2)}_1$ the rank of the solution $U$ equals the rank of $B$.
This property does not hold for $r_\bA \ge 2$ and one then considers the question of low-rank approximability of $U$, that is, the decay of its singular values. Particular attention has been paid to the 
case of a Lyapunov matrix equation
\[
 A U + U A^T = B
\]
for a matrix $B$ of low rank, which plays an important role in control
and model reduction, see, e.g.,~\cite{Benner2013}.
A number of works~\cite{Antoulas2002,Baker2015,Grasedyck2004,Grasedyck2003a,Grubisic2014,Penzl2000,Sabino2006} have been devoted to studying 
low-rank approximability for this problem.
In particular, it has be shown that the singular values of $U$ decay exponentially when
$A$ is symmetric positive definite. All existing proof techniques implicitly
rely on the fact that the two operators $U \mapsto AU$ and $U\mapsto U A^T$ commute. In particular,
this allows for the simultaneous diagonalization of both operators, which greatly simplifies the approximation problem. When this commutativity property is lost, these techniques fail. For example,
only partial results~\cite{Benner2013a,Merz2012} are available so far for the innocently looking modification
\[
 A U + U A^T + CU C^T= B
\]
for general matrix $C$, which plays a role in bilinear and stochastic control.
This indicates that we cannot expect to obtain exponential singular value decay for such generalizations. 

In general, we consider linear systems and eigenvalue problems of the form
\begin{equation} \label{eq:highdimblabla}
  \bA \bu  = \bb, \qquad \bA \bu = \lambda \bu,
\end{equation}
where $\bA$ is a self-adjoint positive definite and bounded linear operator on a tensor product $H_1 \otimes \cdots \otimes H_d$
of Hilbert spaces $H_\mu$, $\mu = 1,\ldots,d$.
We will study the low-rank approximability of the solution $\bu \in H_1 \otimes \cdots \otimes H_d$
in certain tensor network formats, such as the 
tensor train format~\cite{Oseledets2011} (matrix product states~\cite{OestlundRommer1995}) and
the hierarchical Tucker format~\cite{HackbuschKuehn2009}  (tensor tree networks~\cite{Shi2006}).
For these formats, the low-rank approximability is closely tied to the singular value decays of certain bilinear unfoldings associated with the tensor~\cite{Hackbusch2012}. 
This plays an important role in the study of quantum many-body systems~\cite{Schollwock2011}, 
where these decays are reflected in bounds on the entanglement entropy~\cite{EisertCramerPlenio2010}.
For linear lattice models, rigorous bounds by Hastings~\cite{Hastings2007}
imply a low-rank approximability that does \emph{not} deteriorate as the order $d$ increases.
In the special case of frustration-free systems, similar results~\cite{Arad2012}
can be derived via a simplified construction that only takes the algebraic properties of the involved operators
into account.

The purpose of this work is to propose a general framework for obtaining singular value decay estimates for the solutions of~\eqref{eq:highdimblabla}.
Following the basic idea of~\cite{Arad2012}, our results are based on controlling the rank
growth of a fixed-point iteration.
This approach is constructive and only exploits the tensor product structure of the involved operators. The assumed structure features quite frequently in applications, for example in
Schr\"odinger type eigenvalue problems~\cite{Khoromskij2010d,Kressner2011a},
quantum many-body systems with local interactions~\cite{Schollwock2011},
the chemical master equation for simulating biochemical reaction networks~\cite{Kazeev2013},
and Markov models for queuing networks~\cite{Kressner2014a}.
Under certain conditions, the derived estimates do not deteriorate with increasing $d$.
Our construction shares similarities with recent results by Bachmayr and Dahmen~\cite{BachmayrDahmen2015},
who use the method of steepest descent to design a nearly optimal solver for linear systems.
In contrast to our work, these results
\emph{assume} the low-rank approximability of the solution a priori.

Our results state algebraic approximation rates with respect to increasing ranks.
An exponential approximation rate can only be obtained under certain
commutativity assumptions, similar to the Lyapunov equation discussed above.
One of the very few results in this direction is the approximation
of the solution to the $d$-dimensional Poisson equation by means of exponential sums~\cite{Grasedyck2004,Hackbusch2012}.


The rest of this paper is organized as follows. In Section~\ref{sec: abstract results},
we provide a general framework for 
assessing the interplay between rank growth and convergence rate of fixed point iterations
on tensor products of Hilbert spaces.
Section~\ref{sec:linearequations} specializes this framework to the method of steepest descent
applied to linear systems with tensor product structure, resulting in singular value decay estimates
for the solution. In a similar manner, Section~\ref{sec: eigenvectors} covers symmetric eigenvalue problems.

\section{Approximation by fixed-point iterations with finite rank growth}\label{sec: abstract results}

In this section, we develop our general framework for low-rank tensor approximation
by first considering the case $d = 2$ and then extending these results to tensors of arbitrary
order $d$.

\subsection{Bilinear approximation}

Let $H_1, H_2$ be two Hilbert spaces (either both real or both complex), and
consider the tensor product  $\bH = H_1 \otimes H_2$ with the induced inner product
$\langle u_1 \otimes v_1, u_2 \otimes v_2 \rangle_{\bH} = \langle u_1 , v_1 \rangle_{H_1} \cdot \langle u_2 , v_2 \rangle_{H_2}$. Note that $\bH$ is isomorphic to $HS(H_1,H_2)$,
the space of Hilbert-Schmidt operators from $H_2$ to $H_1$.
Every tensor $\bu \in \bH$ admits a \emph{singular value decomposition} (SVD)
\begin{equation}\label{eq:SVD}
\bu = \sum_{k=1}^\infty \sigma_k u_k \otimes v_k,
\end{equation}
with $u_1,u_2,\dots$ and $v_1,v_2,\dots$ forming complete orthonormal systems in $H_1$ and $H_2$, respectively, and \emph{singular values} $\sigma_1 \ge \sigma_2 \ge \dots \ge 0$. The smallest $r$ for which $\sigma_{r+1}=0$ is called the rank of $\bu$. If there is no such $r$, the rank of $\bu$ is $\infty$.

We denote by
\[
\tau_r(\bu) = \inf_{\substack{\tilde u_1,\dots,\tilde u_r \in H_1 \\ \tilde v_1, \dots, \tilde v_r \in H_2}} \bigg\| \bu - \sum_{k=1}^r \tilde u_k \otimes \tilde v_k \bigg\|_{\bH}
\]
the error for the best bilinear approximation of rank at most $r$. It is well known that the infimum is achieved by the sum of the first $r$ terms in the singular value decomposition, and
\[
\tau_r(\bu) = \min_{\rank (\bv) \le r} \| \bu - \bv \|_{\bH} = \bigg( \sum_{k=r+1}^\infty \sigma_k^2 \bigg)^{1/2}.
\]

In the sequel we will be concerned with the case that $\bu$ is implicitly given, e.g., as the solution of an optimization problem that represents a linear operator equation or eigenvalue problem.
The basis of our framework is to approach $\bu$ by a fixed-point iteration
\begin{equation}\label{eq:fixed-point iteration}
\bu_{n+1} = \Phi(\bu_n)
\end{equation}
which has a guaranteed convergence rate, but increases the ranks of the iterates at most by a constant factor in every step. Examples for~\eqref{eq:fixed-point iteration} relevant for linear systems are gradient descent methods, like the Richardson iteration that will be used later on. However, other fixed-point iterations are imaginable wherefore we first keep the setting general. We need the following properties.

\medskip

\begin{enumerate}[(i)]
\item
\emph{Contraction:} 
There exists $0<q<1$ and $c>0$ such that
\begin{equation}\label{A1}
 \| \bu_{n+1} - \bu \|_\bH \le c q^{n+1} \| \bu_0 - \bu \|_\bH \quad \text{for all $n$.} \tag{A1}
\end{equation}
\item
\emph{Finite rank growth:} There exists $\mult > 1$ such that
\begin{equation}\label{A2}
\rank(\bu_{n+1}) \le \mult\cdot \rank(\bu_n)  \quad \text{for all $n$.} \tag{A2}
\end{equation}
\end{enumerate}

\medskip

The missing ingredient is that the starting point $\bu_0$ should have known finite rank. In fact, we will assume that $\rank(\bu_0) \le 1$. The limit point (as well as the other properties) of the iteration may depend on the choice of $\bu_0$ (this will become particularly visible  for the case of eigenvalue problems in Section~\ref{sec: eigenvectors}). We therefore consider a set
\begin{equation} \label{eq:domain}
 \domain \subseteq \{ \bu_0 \vcentcolon \text{the sequence $(\bu_n)$ generated from $\bu_0$ by~\eqref{eq:fixed-point iteration} satisfies~\eqref{A1} and~\eqref{A2}}\},
\end{equation}
and assume
\begin{enumerate}[(iii)]
\item
\emph{Rank-one starting point:} Properties~\eqref{A1} and~\eqref{A2} can be satisfied using a starting point in $\domain$ with rank at most one, that is,
\begin{equation}\label{A4}
 \domain \cap \{ \bu_0 \in \bH \vcentcolon \rank(\bu_0) \le 1 \} \neq \emptyset. \tag{A0}
\end{equation}
\end{enumerate}
Given~\eqref{A4}, one can define the quantity
\[
\pi_1(\bu) = \inf_{\substack{\bv \in \domain \\ \rank(\bv) \le 1}} \|\bv - \bu\|_\bH,
\]
and derive the main result of this section.


\begin{theorem}\label{th: abstract result d2}
The existence of a map $\Phi$ on $\bH$ satisfying~\eqref{A4} implies
\begin{equation}\label{eq: first estimate}
\tau_r(\bu) \le c \pi_1(\bu) \sqrt{\left( 1 - \frac{(1-q^2) (r - \mult^{\lfloor \log_\mult r \rfloor})}{(\mult-1) \mult^{\lfloor \log_\mult r \rfloor} } \right)} q^{\lfloor \log_\mult r \rfloor}.
\end{equation}
Simplified bounds are given by
\begin{equation}\label{eq: cleaner bilinear estimate}
\begin{aligned}
\tau_r(\bu) \le c \pi_1(\bu)  q^{\lfloor \log_\mult r \rfloor}  \le c \pi_1(\bu)  q^{(\log_\mult r) - 1} = c \pi_1(\bu) q^{-1} \left( \frac 1 r \right)^{\abs{\frac{\ln q}{\ln \mult}}}.
\end{aligned}
\end{equation}
\end{theorem}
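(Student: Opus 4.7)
The plan is to iterate the map $\Phi$ from a rank-one seed and turn the iteration history into controlled-rank approximants of $\bu$. Fix any $\bu_0\in\domain$ with $\rank(\bu_0)\le 1$, as permitted by~\eqref{A4}. By induction via~\eqref{A2} the iterate $\bu_n$ has rank at most $\mult^n$, and by~\eqref{A1} it satisfies $\|\bu_n-\bu\|_\bH\le c q^n\|\bu_0-\bu\|_\bH$. Given $r$, set $m=\lfloor\log_\mult r\rfloor$, so that $\mult^m\le r<\mult^{m+1}$. Since $\rank(\bu_m)\le\mult^m\le r$, one already has $\tau_r(\bu)\le\|\bu_m-\bu\|_\bH\le c q^m\|\bu_0-\bu\|_\bH$; taking the infimum over rank-one starting points in $\domain$ gives $\tau_r(\bu)\le c\pi_1(\bu)q^m$, which is the middle estimate of~\eqref{eq: cleaner bilinear estimate}. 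The final form of~\eqref{eq: cleaner bilinear estimate} follows from $q^m\le q^{\log_\mult r-1}$ together with the rewrite $q^{\log_\mult r}=r^{\log_\mult q}=(1/r)^{|\ln q/\ln\mult|}$.

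For the sharper bound~\eqref{eq: first estimate}, we exploit the unused budget $r-\mult^m<\mult^{m+1}-\mult^m$ by performing one more step to obtain $\bu_{m+1}$ of rank at most $\mult^{m+1}$, and letting $\bv_r$ denote its best rank-$r$ approximation via the SVD. Writing $s_1\ge s_2\ge\cdots$ for the singular values of $\bu_{m+1}$, the elementary monotonicity bounds $\sum_{k=\mult^m+1}^r s_k^2\ge(r-\mult^m)s_r^2$ and $\sum_{k>r}s_k^2\le(\mult^{m+1}-r)s_r^2$ combine to yield
\[
\|\bu_{m+1}-\bv_r\|_\bH^2=\sum_{k>r}s_k^2\le\frac{\mult^{m+1}-r}{\mult^{m+1}-\mult^m}\sum_{k>\mult^m}s_k^2\le\frac{\mult^{m+1}-r}{\mult^{m+1}-\mult^m}\,\|\bu_{m+1}-\bu_m\|_\bH^2,
\]
where the last inequality uses that $\bu_m$ is a rank-$\le\mult^m$ competitor for $\bu_{m+1}$.

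The main obstacle is then to convert this truncation estimate and the iteration bound $\|\bu-\bu_{m+1}\|_\bH\le c q^{m+1}\|\bu_0-\bu\|_\bH$ into the convex combination of squared errors $\|\bu-\bv_r\|_\bH^2\le(1-\alpha)\|\bu_m-\bu\|_\bH^2+\alpha\|\bu_{m+1}-\bu\|_\bH^2$ with $\alpha=(r-\mult^m)/(\mult^{m+1}-\mult^m)$; substituting the iteration bounds then yields precisely the factor $q^{2m}(1-(1-q^2)\alpha)$ that appears inside the square root of~\eqref{eq: first estimate}. A naive triangle inequality is too crude (it loses a factor of order $1+q$); the sharp combination requires controlling the cross term in $\|\bu-\bv_r\|_\bH^2=\|\bu-\bu_{m+1}\|_\bH^2+2\langle\bu-\bu_{m+1},\bu_{m+1}-\bv_r\rangle_\bH+\|\bu_{m+1}-\bv_r\|_\bH^2$ by exploiting the orthogonality in $\bH$ between the SVD remainder $\bu_{m+1}-\bv_r$ and the top-$r$ singular subspace of $\bu_{m+1}$, together with the contraction estimates at iterations $m$ and $m+1$. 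Once this combination is in place, taking the infimum over rank-one $\bu_0\in\domain$ produces~\eqref{eq: first estimate}, and the simplified estimates~\eqref{eq: cleaner bilinear estimate} are immediate consequences.
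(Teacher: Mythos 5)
For the simplified estimates in~\eqref{eq: cleaner bilinear estimate}, your argument is correct and coincides with the paper's: starting from a rank-one seed in $\domain$, the contraction and rank-growth assumptions give $\tau_{\mult^n}(\bu)\le c\pi_1(\bu)q^n$ after taking the infimum over admissible $\bu_0$, and setting $n=\lfloor\log_\mult r\rfloor$ yields the claim.

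The refined estimate~\eqref{eq: first estimate}, however, is where your proposal has a genuine gap --- one you yourself flag as ``the main obstacle'' but do not resolve. The inequality you would need, $\|\bu-\bv_r\|_\bH^2\le(1-\alpha)\|\bu_m-\bu\|_\bH^2+\alpha\|\bu_{m+1}-\bu\|_\bH^2$ with $\bv_r$ the best rank-$r$ truncation of $\bu_{m+1}$, does not follow from the given data. The orthogonality you invoke lives inside the SVD of $\bu_{m+1}$ and says nothing about the cross term $\langle\bu-\bu_{m+1},\,\bu_{m+1}-\bv_r\rangle_\bH$. Already for $r=\mult^m$ (so $\alpha=0$) your claimed inequality would force $\|\bu-\bv_{\mult^m}\|_\bH\le\|\bu-\bu_m\|_\bH$, i.e.\ that the best rank-$\mult^m$ truncation of the next iterate $\bu_{m+1}$ be at least as close to $\bu$ as $\bu_m$ is; there is no reason for this to hold, since $\bv_{\mult^m}$ optimizes distance to $\bu_{m+1}$, not to $\bu$. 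In short, the rank-$r$ truncation of a single iterate is generally a suboptimal competitor for $\tau_r(\bu)$, and trying to interpolate its error does not deliver the convex combination you want.

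The paper's proof sidesteps this entirely by working with the singular values $(\sigma_k)$ of the limit $\bu$ itself rather than of any iterate, and never constructs an explicit rank-$r$ approximant for intermediate $r$. Since $\tau_r(\bu)^2=\sum_{k>r}\sigma_k^2$, one has the exact identity $\tau_r^2=\tau_{\mult^n}^2-\sum_{k=\mult^n+1}^{r}\sigma_k^2$, and the monotonicity of $(\sigma_k)$ gives, for $0\le s<\mult^{n+1}-\mult^n$,
\[
\sum_{k=\mult^n+1}^{\mult^n+s}\sigma_k^2\ \ge\ \frac{s}{(\mult-1)\mult^n}\sum_{k=\mult^n+1}^{\mult^{n+1}}\sigma_k^2=\frac{s}{(\mult-1)\mult^n}\bigl(\tau_{\mult^n}^2-\tau_{\mult^{n+1}}^2\bigr).
\]
Substituting the already-established $\tau_{\mult^n}\le c\pi_1(\bu)q^n$ and $\tau_{\mult^{n+1}}\le c\pi_1(\bu)q^{n+1}$ into $\tau_r^2\le\tau_{\mult^n}^2-\frac{s}{(\mult-1)\mult^n}(\tau_{\mult^n}^2-\tau_{\mult^{n+1}}^2)$ then yields~\eqref{eq: first estimate} directly, with no cross term to control. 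This is the step you should replace your approximant-based interpolation with.
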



\begin{proof}
For brevity, we write $\tau_r$ instead of $\tau_r(\bu)$. By~\eqref{A4}, there is a starting point $\bu_0 \in \domain$ of rank at most one such that the sequence $(\bu_n)$ formed by~\eqref{eq:fixed-point iteration} satisfies~\eqref{A1} and~\eqref{A2}. Consequently,
$\rank( \bu_n ) \le \mult^n$ and 
\[
\tau_{\mult^n} \le \| \bu_n - \bu\|_{\bH} \le c q^{n} \| \bu_n - \bu_0 \|_\bH.
\]
As this holds for all admissible $\bu_0$, we may pass to the infimum:
\begin{equation}\label{eq: estimate for powers of mult}
\tau_{\mult^n} \le c \pi_1(\bu) q^{n}.
\end{equation}
Since the sequence $(\sigma_k)$ is decreasing, we have
for every $0 \le s < \mult^{n+1} - \mult^n$ that
\[
\sum_{k = \mult^n + 1}^{\mult^n + s} \sigma_k^2 \ge \frac{s}{\mult^{n + 1} - \mult^n} \sum_{k = \mult^n + 1}^{\mult^{n+1}} \sigma_k^2 = \frac{s}{(\mult-1)\mult^n} ( \tau_{\mult^n}^2 - \tau_{\mult^{n+1}}^2 ).
\]
Hence, using~\eqref{eq: estimate for powers of mult},
we obtain for $r = \mult^n + s$ the estimate
\begin{align*}
\tau^2_r = \tau_{\mult^n}^2 -  \sum_{k = \mult^n + 1}^{\mult^n + s} \sigma_k^2 &\le \tau_{\mult^n}^2 - \frac{s}{(\mult-1)\mult^n} ( \tau_{\mult^n}^2 - \tau_{\mult^{n+1}}^2 ) \\
&\le c^2 \pi_1(\bu)^2 \left( 1 - \frac{(1-q^2)s}{(\mult-1)\mult^n} \right) q^{2n},
\end{align*}
as asserted by~\eqref{eq: first estimate}. The simplified bound~\eqref{eq: cleaner bilinear estimate} follows from the observation that the term under the square root in~\eqref{eq: first estimate} is bounded by one.
\end{proof}

By general results for ordered sequences~\cite{DeVore1998}, a decay rate for the tail $\tau_r(\bu)$ yields a decay rate for the singular values themselves. For instance, using~\eqref{eq: cleaner bilinear estimate}, we obtain
\begin{equation}\label{eq: singular value rate}
\sigma_r^2 \le
\frac{\sum_{k = \lfloor r/2 \rfloor + 1}^r \sigma_k^2}{r - \lfloor r/2 \rfloor}
\le \frac{\tau_{\lfloor r/2 \rfloor}^2(\bu)}{\lfloor r/2 \rfloor}
\le c \pi_1(\bu) q^{-2} \left( \frac{1}{\lfloor r/2 \rfloor} \right)^{2\abs{\frac{\ln q}{\ln \mult}}}
\le c \pi_1(\bu) q^{-2} \left( \frac{2}{r-1} \right)^{2\abs{\frac{\ln q}{\ln \mult}}}.
\end{equation}
One consequence of~\eqref{eq: singular value rate} is that the von Neumann entropy of the squared singular values,
\[
S(\bu) = \sum_{k=1}^\infty \sigma_k^2 \log (\sigma_k^2),
\]
remains finite, \change{provided that $q^2 \mult < 1$.  This is a non-trivial result since $\bu \in H_1\otimes H_2$ only implies the convergence of $\sum_{k=1}^\infty \sigma_k^2$. Explicit bounds on the von Neumann entropy $S(\bu)$ are of interest in many applications, for instance in quantum particle models where it represents the \emph{entanglement entropy} of ground states~\cite{Arad2012,EisertCramerPlenio2010,Hastings2007}. The quite strong condition~$q^2 \mult < 1$ on the fixed point iteration will reappear in Theorem~\ref{th: estimate the overlap} to deduce~\eqref{A4} from~\eqref{A1} and~\eqref{A2} in the case that $\domain$ is the affine plane orthogonal to $\bu$~\cite{Arad2012}.}



\subsection{Multilinear approximation} We now consider $d \ge 2$ Hilbert spaces $H_1,H_2,\dots,H_d$ (either all real or all complex). For each subset $t \subseteq \{1,2,\dots,d\}$ of indices with $0<|t|<d$, we have the following isomorphism between the tensor product Hilbert space
\[
\bH = H_1 \otimes H_2 \otimes \dots \otimes H_d
\]
and Hilbert-Schmidt operators:
\begin{equation}\label{eq: identification}
\bH \cong HS\bigg( \bigotimes_{\mu \in t} H_\mu, \bigotimes_{\nu \notin t} H_\nu \bigg),
\end{equation}
see, e.g.,~\cite{Hackbusch2012}.
In the finite-dimensional case, this simply amounts to reshaping the tensor into a
matrix, with the indices corresponding to $t$ merged into the row indices.

The isomorphism~\eqref{eq: identification} allows us to introduce the \emph{$t$-rank} of $\bu \in \bH$, denoted by $\rank^{(t)}(\bu)$, as the rank of the associated Hilbert-Schmidt operator.
Correspondingly, the sequence of singular values $(\sigma_k^{(t)})$, and the best bilinear approximation errors
\[
\tau^{(t)}_r(\bu) = \min_{\rank^{(t)} (\bv) \le r} \| \bu - \bv \|_{\bH} = \bigg( \sum_{k=r+1}^\infty (\sigma_k^{(t)})^2 \bigg)^{1/2}
\]
can be defined.

Theorem~\ref{th: abstract result d2} implies for fixed $t$ that
\begin{equation}\label{eq:simplified estimate for t-rank}
\tau^{(t)}_r(\bu) \le c \pi_1^{(t)}(\bu) q^{-1}\left( \frac 1 r \right)^{\abs{\frac{\ln q}{\ln \mult^{(t)}}}}
\end{equation}
under slightly modified assumptions. In particular,
the property~\eqref{A2} is replaced by 
\[
\rank^{(t)}(\bu_{n+1}) \le \mult^{(t)}\cdot \rank^{(t)}(\bu_n)  
\]
for some $\mult^{(t)}>0$. The other properties remain the same.
In principle, the constants
$q$ and $c$ involved in~\eqref{A1} 
could also depend on $t$ but, for simplicity, we omit this dependence.
With $\domain$ defined as in~\eqref{eq:domain},
the analogue of the main assumption~\eqref{A4}
is that the quantity
\begin{equation} \label{eq:defpi}
 \pi_1^{(t)}(\bu) = \inf_{\substack{\bv \in \domain \\ \rank^{(t)}(\bv) \le 1}} \|\bv - \bu\|_\bH
\end{equation}
is finite.

Knowing the decay properties of $\tau^{(t)}_r(\bu)$ for certain choices of $t$ is crucial for understanding the approximability of $\bu$ in subspace based low-rank tensor formats.
For instance, the tensor train format~\cite{Oseledets2011} involves the $t$-ranks
of $t=\{1,2,\dots,\mu\}$ for $\mu = 1,2,\dots,d-1$. For prescribed ranks $r_\mu$,
the best approximation error in this format admits the quasi-optimal bound~\cite[Thm. 2.2]{Oseledets2010}
\[
 \sqrt{\big(\tau^{\{1\}}_{r_1}(\bu)\big)^2 + 
 \big(\tau^{\{1,2\}}_{r_2}(\bu)\big)^2 + \cdots + \big(\tau^{\{1,\ldots,d-1\}}_{r_{d-1}}(\bu)\big)^2}.
\]
More specifically, $d$-independent bounds on the von Neumann entropies of the singular values $(\sigma_k^{(t)})$ for these specific choices of $t$ constitute one-dimensional \emph{area laws} in the theory of quantum spin systems~\cite{Arad2012,EisertCramerPlenio2010,Hastings2007}.



\section{Linear equations with low-rank operators and low-rank data} \label{sec:linearequations}


We now apply the general framework from Section~\ref{sec: abstract results}
to a linear system 
\begin{equation}\label{eq: operator equation}
\bA \bu = \bb,
\end{equation}
where $\bA$ is a self-adjoint operator on $\bH$ with 
\begin{equation}\label{eq: coercivity}
\gamma \| \bv \|_\bH^2 \le \langle \bv, \bA \bv \rangle_\bH \le \Gamma \| \bv \|_\bH^2
\end{equation}
for some $0 < \gamma < \Gamma \change{< \infty}$. In particular, this is the case when all Hilbert space are finite-dimensional and $\bA$ is a Hermitian positive definite matrix acting on $\bH$.



The solution $\bu$ of~\eqref{eq: operator equation} is a fixed-point of the \emph{Richardson iteration} 
\begin{equation}\label{eq: Richardson step}
\bu_{n+1} = \Phi(\bu_n) := \bu_n - \alpha (\bA \bu_n - \bb), \quad \alpha = \frac{2}{\gamma + \Gamma}.
\end{equation}
It is well-known that the convergence rate is bounded as follows:
\[
\| \bI - \alpha \bA \|_{\bH \to \bH} \le \frac{\kappa - 1}{\kappa + 1} < 1,
\]
with 
the condition number $\kappa = \Gamma / \gamma$.
Therefore, 
\begin{equation}\label{eq: contraction of Richardson}
\| \bu_{n+1} - \bu \|_\bH \le \left( \frac{\kappa - 1}{\kappa + 1} \right)^{n+1} \| \bu_0 - \bu \|_\bH
\end{equation}
holds for all $n\ge 0$.

%

For a fixed choice of 
$t \subseteq \{1,2,\dots,d\}$, $0< \abs{t} < d$,
we now assume that 
the operator and right-hand side admit a low-rank representation
with respect to the splitting~\eqref{eq: identification}:
\begin{equation}\label{eq: A with low t-rank}
\bA = \sum_{i=1}^{r_\bA^{(t)}} A^{(t)}_i \otimes A^{(t^c)}_i, \qquad 
\bb = \sum_{j = 1}^{r_\bb^{(t)}} b_j^{(t)} \otimes b_j^{(t^c)},
\end{equation}
where $t^c = \{1,2,\dots,d\} \setminus t$. We will assume $r_\bb^{(t)} \le r_\bA^{(t)}$, the general \change{finite rank} case can be obtained by superposition 
\change{as follows. If the $t$-rank of $\bb$ is finite but exceeds $r_\bA^{(t)}$, we first write $\bb = \bb_1 + \cdots + \bb_m$ such that each summand has $t$-rank
at most $r_\bA^{(t)}$. We then apply the result below to each linear system $\bA \bu_1 = \bb_1$, $\ldots$,  $\bA \bu_m = \bb_m$ 
to obtain approximability results for $\bu = \bu_1 + \cdots + \bu_m$.} 

\begin{theorem}\label{th: bound from steepest descent for linear equations}
Given~\eqref{eq: coercivity} and~\eqref{eq: A with low t-rank} with $r_\bb^{(t)} \le r_\bA^{(t)}$, the solution $\bu$ of~\eqref{eq: operator equation} satisfies
\begin{equation} \label{eq:lalabound}
 \tau_r^{(t)}(\bu) \le \frac{\| \bu \|_\bH}{q} \left( \frac 1 r \right)^{\abs{\frac{ \ln q}{\ln R^{(t)}}}} 
\end{equation}
with $R^{(t)} = r_\bA^{(t)}+2$ and $q = \frac{\kappa - 1}{\kappa + 1}$.
If, additionally, $A_i^{(t)}$ or $A_i^{(t^c)}$
in~\eqref{eq: A with low t-rank} is the identity 
for some $i$, then~\eqref{eq:lalabound}
holds with $R^{(t)} = r_\bA^{(t)}+1$.
\end{theorem}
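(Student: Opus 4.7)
The plan is to instantiate the abstract multilinear framework from Section~\ref{sec: abstract results} using the Richardson iteration~\eqref{eq: Richardson step} as the fixed-point map $\Phi$, and the zero vector as the starting point $\bu_0 = 0$. Since $\mathrm{rank}^{(t)}(0) = 0 \le 1$, taking $\bu_0 = 0 \in \domain$ settles the ``rank-one starting point'' condition~\eqref{A4} trivially, and moreover it gives the convenient bound
\[
\pi_1^{(t)}(\bu) \le \|\bu - 0\|_\bH = \|\bu\|_\bH,
\]
which is exactly what appears on the right-hand side of~\eqref{eq:lalabound}. Contraction~\eqref{A1} with constant $c=1$ and rate $q = (\kappa-1)/(\kappa+1)$ is immediate from~\eqref{eq: contraction of Richardson}.

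The real content is verifying the rank-growth bound, i.e.\ an estimate of the form $\mathrm{rank}^{(t)}(\bu_n) \le (R^{(t)})^n$. I would write one Richardson step as
\[
\bu_{n+1} = \bu_n - \alpha\bA\bu_n + \alpha\bb
\]
and read off each summand's $t$-rank using~\eqref{eq: A with low t-rank}: if $\rho_n := \mathrm{rank}^{(t)}(\bu_n)$, then $\mathrm{rank}^{(t)}(\bA\bu_n) \le r_\bA^{(t)} \rho_n$, and $\mathrm{rank}^{(t)}(\bb) \le r_\bb^{(t)} \le r_\bA^{(t)}$. Hence
\[
\rho_{n+1} \le (r_\bA^{(t)}+1)\rho_n + r_\bA^{(t)}.
\]
Starting from $\rho_0 = 0$, one has $\rho_1 \le r_\bA^{(t)} \le R^{(t)}$ with $R^{(t)} = r_\bA^{(t)}+2$, and an easy induction (using $r_\bA^{(t)} \le R^n$ for $n\ge 1$ since $R > r_\bA^{(t)}$) yields $\rho_n \le (R^{(t)})^n$. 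This is the main technical step and also the main obstacle: the Richardson iteration is not purely multiplicative on ranks because of the additive $\alpha\bb$ term, so the clean multiplicative bound $R^{(t)} = r_\bA^{(t)}+2$ has to absorb that additive constant, which is why the bound increments by $2$ rather than $1$. In the special case that some $A_i^{(t)}$ or $A_i^{(t^c)}$ equals the identity, the operator $\bI - \alpha\bA$ already has $t$-rank at most $r_\bA^{(t)}$ (the identity term of $\bA$ merges with $\bI$), so the inductive step improves to $\rho_{n+1} \le r_\bA^{(t)}\rho_n + r_\bA^{(t)}$, and the same argument yields $\rho_n \le (r_\bA^{(t)}+1)^n$, explaining the improved $R^{(t)} = r_\bA^{(t)}+1$.

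With the chain $\mathrm{rank}^{(t)}(\bu_n) \le (R^{(t)})^n$ established, the argument in the proof of Theorem~\ref{th: abstract result d2} goes through verbatim in the $t$-rank setting: one gets $\tau^{(t)}_{(R^{(t)})^n}(\bu) \le \|\bu\|_\bH q^n$ from~\eqref{eq: contraction of Richardson}, interpolates across $r \in [(R^{(t)})^n,(R^{(t)})^{n+1}]$ as in~\eqref{eq: first estimate}, and finally invokes the simplification~\eqref{eq: cleaner bilinear estimate} to conclude the stated bound
\[
\tau_r^{(t)}(\bu) \le \frac{\|\bu\|_\bH}{q}\left(\frac{1}{r}\right)^{\bigl|\frac{\ln q}{\ln R^{(t)}}\bigr|}.
\]
Everything else (the precise value of $\alpha$, the constants $\gamma,\Gamma$) only enters through $q$ and $c$, so no further computation is needed.
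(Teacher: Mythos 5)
Your proof is correct and follows essentially the same route as the paper: Richardson iteration with $\bu_0 = \mathbf{0}$, reading off the $t$-rank growth directly from the low-rank representation~\eqref{eq: A with low t-rank}, and then invoking the abstract machinery of Theorem~\ref{th: abstract result d2}. If anything, you are slightly more careful than the paper in absorbing the additive $r_\bb^{(t)}$ term: the paper's displayed inequality~\eqref{eq: rank increase SD} implicitly assumes $\rank^{(t)}(\bu_n) \ge r_\bb^{(t)}$, whereas your explicit induction $\rho_n \le (R^{(t)})^n$ starting from $\rho_0 = 0$ (using $r_\bA^{(t)} \le (R^{(t)})^n$ for $n \ge 1$) handles the bookkeeping cleanly; the same remark applies to the improved bound $R^{(t)} = r_\bA^{(t)}+1$ when some $A_i^{(t)}$ or $A_i^{(t^c)}$ is the identity.
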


\begin{proof}
By expanding all terms, one concludes from~\eqref{eq: Richardson step} and~\eqref{eq: A with low t-rank} that
\begin{equation}\label{eq: rank increase SD}
\rank^{(t)}(\bu_{n+1}) \le \rank^{(t)}(\bu_n) + r_\bA^{(t)} \rank^{(t)}(\bu_n) + r_\bb^{(t)} \le (r_\bA^{(t)}+2) \rank^{(t)}(\bu_n).
\end{equation}
Taking also~\eqref{eq: contraction of Richardson} into account, we see that for any starting point $\bu_0 \in \bH$ the conditions~\eqref{A1} and~\eqref{A2} hold with $q = \frac{\kappa - 1}{\kappa + 1}$, $c=1$, and $R^{(t)} = r_\bA^{(t)} + 2$. Hence,
the domain $\domain$ considered in~\eqref{eq:domain} can be taken to be $\domain = \bH$
for this choice of parameters, and therefore~\eqref{A4} trivially holds.
Considering $\bu_0 = \mathbf{0}$ yields the estimate $\pi^{(t)}_1(\bu) \le \| \bu \|_\bH$. Consequently, the first part of the theorem is an instance of~\eqref{eq:simplified estimate for t-rank}.

To show the second part, we may assume w.l.o.g. that $A_1^{(t)} = I$ in~\eqref{eq: A with low t-rank}. Then we can rewrite
\[
\bu_n - \alpha \bA \bu_n = \left( I \otimes (I - \alpha A_1^{(t)}) - \alpha \sum_{i=2}^{r_\bA^{(t)}} A^{(t)}_i \otimes A^{(t^c)}_i \right) \bu_n,
\]
so that the rank actually increases at most by a factor of $R^{(t)} = r_\bA^{(t)} + 1$. 
\end{proof}

\begin{example}\label{example: nearest neighbor interaction}
The following structure occurs frequently in applications of high-dimensional operator equations:
\begin{equation} \label{eq:structureA}
  \mathbf A = \mathbf L + \mathbf V,
\end{equation}
where
\begin{gather*}
  \mathbf L = A_1 \otimes I \otimes \cdots \otimes I + I \otimes A_2 \otimes \cdots \otimes I + \cdots + I \otimes  \cdots \otimes I \otimes A_d,\\
 \mathbf V = B_1 \otimes C_2 \otimes I \otimes \cdots \otimes I + 
 I \otimes B_2 \otimes C_3 \otimes \cdots \otimes I +
 I \otimes \cdots \otimes I \otimes B_{d-1} \otimes C_{d}.
\end{gather*}
Here, the $\mu$th term of $\mathbf L$ represents the action on the $\mu$th variable. For example,
a structured discretization of the $d$-dimensional Laplace operator takes this form. The terms in $\mathbf V$ describe interactions between two neighboring variables.

We assume that all involved coefficients $A_\mu$, $B_\mu$, and $C_\mu$ are bounded self-adjoint
operators satisfying the inequalities
\[\gamma_A \le A_\mu \le \Gamma_A, \quad 
0 \le B_\mu \le \Gamma_B, \quad 
0 \le C_\mu \le \Gamma_C\]
in the spectral sense, for some constants $\gamma_A, \Gamma_A, \Gamma_B, \Gamma_C>0$
independent of $\mu$.
Then $\mathbf A$ is a bounded self-adjoint operator satisfying the inequality~\eqref{eq: coercivity}
with $\gamma = d \gamma_A$ and $\Gamma = d\Gamma_A + (d-1) \Gamma_B \Gamma_C$.
Consequently, the condition number $\kappa$ determining the contraction rate~\eqref{eq: contraction of Richardson} is bounded independently of $d$.

On the other hand, it can be shown by an explicit construction~\cite{Khoromskij2010d,KreSU13} that any operator having the algebraic structure~\eqref{eq:structureA} admits a low-rank representation of the form~\eqref{eq: A with low t-rank}
with $r_\bA^{(t)} = 3$ for any $t = \{1,2,\ldots,\mu\}$.
In turn, the solution to an operator equation with the structure in~\eqref{eq:structureA}
and low-rank right-hand side $\bb$ satisfies the decay estimate~\eqref{eq:lalabound}
for any such $t$, independently of $d$. As discussed at the end of Section~\ref{sec: abstract results},
this implies $d$-independent approximability in the tensor train format.
By~\cite[Ex. 5.2]{Kressner2011a}, the same conclusion holds for the hierarchical Tucker format.
\end{example}

\change{
It is instructive to discuss the special case $\mathbf V = 0$ in Example~\ref{example: nearest neighbor interaction}, which corresponds to the absence of the neighbor interaction terms $B_\mu$ and $C_\mu$. Resolving the recursion, the iterates produced by the method of steepest descent~\eqref{eq: Richardson step} take the form
\begin{equation} \label{eq:explicitrepresentation}
 \bu_{n} = (\bI - \alpha \bL)^n \bu_0 + \alpha \sum_{i = 0}^{n-1} (\bI - \alpha \bL)^i \bb.
\end{equation}
For a fixed choice of $t \subseteq \{1,2,\dots,d\}$, $0< \abs{t} < d$,
the structure of $\bL$ implies that we can partition, similarly as in~\eqref{eq: A with low t-rank},
\[
 \bI - \alpha \bL = I \otimes L^{(t)} + L^{(t^c)} \otimes I.
\]
Noting that $I \otimes L^{(t)}$ and $L^{(t^c)} \otimes I$ commute,
this allows to rewrite~\eqref{eq:explicitrepresentation} as
\[
 \bu_{n} = p( L^{(t)}, L^{(t^c)}) \bu_0 + \alpha q( L^{(t)}, L^{(t^c)}) \bb,
\]
with
\[
 p(L^{(t)}, L^{(t^c)} ) = \sum_{k=0}^n \binom{n}{k} (L^{(t)})^{k} \otimes  (L^{(t^c)})^{n-k}
\]
and
\[
 q(L^{(t)}, L^{(t^c)} ) = \sum_{\ell=0}^{n-1} \sum_{k=0}^{\ell} \binom{\ell}{k} (L^{(t)})^{k} \otimes  (L^{(t^c)})^{\ell-k} = \sum_{k=0}^{n-1} (L^{(t)})^{k} \otimes  \left( \sum_{\ell=k}^{n-1} \binom{\ell}{k}  (L^{(t^c)})^{\ell-k}\right)
\]
Combined with~\eqref{eq:explicitrepresentation}, this implies
\[
 \rank^{(t)}(\bu_{n}) \le (n+1) \rank^{(t)}(\bu_0) + n \rank^{(t)}(\bb).
\]
This allows us to replace 
the error estimate~\eqref{eq: estimate for powers of mult} in the proof of Theorem~\ref{th: abstract result d2} 
}
by $(\tau_n^{(t)})^2 \lesssim (\frac{\kappa - 1}{\kappa +1 })^{2n}$. In turn, we obtain exponential singular value decays with respect to all such $t$.
Similar and even stronger results can be obtained by approximating the inverse $\bL^{-1}$ of the Laplace-like operator $\bL$ by exponential sums~\cite{Grasedyck2004,Hackbusch2012}.

\section{Eigenvalue problems with low-rank operators}\label{sec: eigenvectors}

As another application of our general framework, we now consider the approximability
of an eigenvector $\bu$ belonging to the smallest eigenvalue $\lambda_1$ of a 
bounded self-adjoint operator $\bA\vcentcolon\bH\to\bH$.
In particular, we have 
\begin{equation}\label{eq:spectrum bounds}
\lambda_1 \| \bv \|^2 \le \langle \bv, \bA \bv \rangle_\bH^{} \le \Gamma \| \bv \|^2_{\bH},
\end{equation}
for some $\Gamma$.

In the following, we assume $\lambda_1$ to be simple. This implies that 
the rest of the spectrum is contained in an interval $[\lambda_2, \Gamma]$ with $\lambda_2 > \lambda_1$. The \emph{absolute gap} and the \emph{relative gap} are denoted by
\begin{equation} \label{eq: gaps}
 \delta = \lambda_2 - \lambda_1, \qquad \Delta = \frac{\delta}{\Gamma - \lambda_1},
\end{equation}
respectively.
These gaps play a critical role in our estimates.


We now fix $\bu$, and denote by $\langle \bu \rangle$ the linear span of $\bu$. To approximate $\bu$, we apply the Richardson iteration to the singular linear system
\begin{equation}\label{equivalent linear system}
\bA_{\lambda_1} \bu := (\bA - \lambda_1 \mathbf{I}) \bu = \mathbf{0},
\end{equation}
but on the nontrivial invariant subspace $\langle \bu \rangle^\bot$. This results in the iteration
\begin{equation}\label{eq: Richardson iteration for ground state}
\bu_{n+1} = \Phi(\bu_n) := \bu_n - \beta \bA_{\lambda_1} \bu_n =  (1 + \beta \lambda_1) \bu_n - \beta \bA \bu_n, \quad \beta = \frac{2}{\delta + \Gamma - \lambda_1}, \quad \bu_0 \in \bu + \langle \bu \rangle^\bot.
\end{equation}
We emphasize that this method assumes the knowledge of the exact $\lambda_1$ a priori. It is therefore primarily of theoretical interest, to derive the desired error estimates for the low-rank approximability of the eigenvector $\bu$. In turn, these estimates could be used to design
a practical method of optimal complexity, in the spirit of~\cite{BachmayrDahmen2015}.

In order to apply Theorem~\ref{th: abstract result d2},
we now verify that the properties~\eqref{A1} and~\eqref{A2} are satisfied.
We begin with discussing the convergence of~\eqref{eq: Richardson iteration for ground state}. By the simplicity of $\lambda_1$, the self-adjoint operator $\bA_{\lambda_1} = \bA - \lambda_1 \bI$ has the one-dimensional kernel $\langle \bu \rangle$. It is bounded from below and above by $\delta$ and $\Gamma -\lambda_1$, respectively, on the invariant subspace $\langle \bu \rangle^\bot$, so its condition number on this subspace is \change{bounded by} $1/\Delta$. This implies that the spectral radius of $\bI - \beta \bA_{\lambda_1}$ on the invariant subspace $\langle \bu \rangle^\bot$ is bounded by $\frac{1-\Delta}{1+\Delta}$. Since
\(
\bu_{n+1} - \bu = (\bI - \beta \bA_{\lambda_1})(\bu_n - \bu),
\)
an induction shows that if $\bu_0 - \bu \in \langle \bu \rangle^\bot$, then $\bu_n - \bu \in \langle \bu \rangle^\bot$ for all $n$, and
\begin{equation}\label{eq: contraction of ground state Richardson}
\| \bu_{n+1} - \bu \|_\bH \le \left( \frac{1-\Delta}{1+\Delta} \right)^{n+1} \| \bu_0 - \bu \|_\bH \quad \text{if $\bu_0 \in \bu + \langle \bu \rangle^\bot$.}
\end{equation}
In other words,~\eqref{A1} holds with $q = \frac{1 - \Delta}{1 + \Delta}$.
%

As for ~\eqref{A2}, similar to~\eqref{eq: rank increase SD}, the $t$-ranks of the iteration~\eqref{eq: Richardson iteration for ground state} satisfy
\begin{equation}\label{eq: rank estimate for groundstate SD}
\rank^{(t)}(\bu_{n+1}) \le (r_\bA^{(t)}+1) \rank^{(t)}(\bu_n),
\end{equation}
provided that $\bA$ admits a representation of the form~\eqref{eq: A with low t-rank}.
Once again, if one of the operators $A_i^{(t)}$ or $A_i^{(t^c)}$ in~\eqref{eq: A with low t-rank} is the identity, then $r_\bA^{(t)} + 1$ can be replaced by $r_\bA^{(t)}$ in~\eqref{eq: rank estimate for groundstate SD}. In both cases, property~\eqref{A2} is satisfied.


\begin{paragraph}{\bf Assumption (A0).} By~\eqref{eq: contraction of ground state Richardson}, 
the set $\domain$ defined in~\eqref{eq:domain} takes the form
\[
 \domain = \{ \bv \in \bH \vcentcolon \langle \bv - \bu , \bu \rangle_\bH = 0 \}.
\]
To verify the main assumption~\eqref{A4}, we have to show
that $\domain$ contains a starting point having $t$-rank at most one.
In fact, let $\hbu_0$ be any element with $\rank^{(t)}(\hbu_0) = 1$ that is not orthogonal to $\bu$. Then
\begin{equation}\label{qe: rescaled rank-one starting point}
\bu_0 = \frac{\| \bu \|_\bH^2}{\langle \bu, \hbu_0 \rangle_\bH} \hbu_0 \in \domain
\end{equation}
with $\rank^{(t)}(\bu_0) = 1$. In turn, the quantity $\pi_1^{(t)}(\bu)$ defined in~\eqref{eq:defpi} is finite.
\end{paragraph}

Our findings above allow us to apply Theorem~\ref{th: abstract result d2} for estimating
the $t$-rank approximation error of the eigenvector $\bu$.
\begin{theorem}\label{th: EV problems}
Given~\eqref{eq: A with low t-rank} and~\eqref{eq:spectrum bounds}, the solution $\bu$ of~\eqref{equivalent linear system} satisfies
\begin{equation}\label{eq: decay rate for EV}
\tau_r^{(t)}(\bu) \le \frac{\pi^{(t)}_1(\bu)}{q}  \left( \frac 1 r \right)^{\abs{\frac{ \ln q}{\ln R^{(t)}}}},
\end{equation}
with $q = \frac{1 - \Delta}{1 + \Delta}$, $R^{(t)} = r_\bA^{(t)}+1$, and the gaps $\delta,\Delta$ defined in~\eqref{eq: gaps}.
If, additionally, $A_i^{(t)}$ or $A_i^{(t^c)}$
in~\eqref{eq: A with low t-rank} is the identity 
for some $i$, then~\eqref{eq: decay rate for EV}
holds with $R^{(t)} = r_\bA^{(t)}$.
\end{theorem}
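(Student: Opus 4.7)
The proof plan is to invoke the $t$-rank version of Theorem~\ref{th: abstract result d2}, namely the estimate~\eqref{eq:simplified estimate for t-rank}, with the map $\Phi$ taken to be the shifted Richardson iteration~\eqref{eq: Richardson iteration for ground state}. All three ingredients~\eqref{A1}, \eqref{A2}, and~\eqref{A4} have been prepared in the discussion preceding the theorem, so the work reduces to collecting these pieces with the correct constants and substituting them into the abstract bound.

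First I would record that~\eqref{A1} holds with $c=1$ and contraction rate $q=(1-\Delta)/(1+\Delta)$: this was obtained in~\eqref{eq: contraction of ground state Richardson} after observing that the affine plane $\bu+\langle\bu\rangle^\bot$ is preserved under $\Phi$ and that $\bI-\beta\bA_{\lambda_1}$ has spectral radius at most $(1-\Delta)/(1+\Delta)$ on the invariant subspace $\langle\bu\rangle^\bot$. Next,~\eqref{A2} is the content of~\eqref{eq: rank estimate for groundstate SD}: expanding $\Phi(\bv)=(1+\beta\lambda_1)\bv-\beta\bA\bv$ via the low-rank representation~\eqref{eq: A with low t-rank} shows that the $t$-rank grows by at most a factor $r_\bA^{(t)}+1$ per step, and only by $r_\bA^{(t)}$ if one of the elementary factors in~\eqref{eq: A with low t-rank} is the identity, since then the $\bv$-summand can be absorbed in that tensor product term.

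The genuine point to be settled is~\eqref{A4}. As noted in the paragraph labeled \textbf{Assumption (A0)} above, the admissible set here is the affine plane $\domain = \{\bv\in\bH \vcentcolon \langle \bv-\bu,\bu\rangle_\bH = 0\}$, and the existence of a starting point in $\domain$ of $t$-rank one requires a rank-one element $\hbu_0$ not orthogonal to $\bu$. Any elementary tensor with $\langle \bu,\hbu_0\rangle_\bH\neq 0$ can be rescaled as in~\eqref{qe: rescaled rank-one starting point} to lie in $\domain$ with $\rank^{(t)}(\bu_0)=1$, so~\eqref{A4} holds and the quantity $\pi^{(t)}_1(\bu)$ from~\eqref{eq:defpi} is finite. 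Substituting $c=1$, $q=(1-\Delta)/(1+\Delta)$, and $R^{(t)}=r_\bA^{(t)}+1$ (respectively $r_\bA^{(t)}$ under the extra identity assumption) into~\eqref{eq:simplified estimate for t-rank} then yields the claimed bound~\eqref{eq: decay rate for EV}.

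The only place where care is needed is the bookkeeping of the rank-growth factor, which is $r_\bA^{(t)}+1$ here instead of the $r_\bA^{(t)}+2$ appearing in Theorem~\ref{th: bound from steepest descent for linear equations}. The improvement reflects the absence of a right-hand side in~\eqref{equivalent linear system}: the iteration is purely multiplicative, so no additional rank-one term has to be accommodated at each step. Beyond this small cosmetic adjustment, every step of the argument is already contained in the preceding discussion, so I anticipate no real obstacle in assembling the proof.
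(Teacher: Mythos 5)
Your proposal is correct and reproduces the paper's own reasoning: the paper deliberately places the verification of~\eqref{A1},~\eqref{A2}, and~\eqref{A4} --- with $c=1$, $q=(1-\Delta)/(1+\Delta)$ from~\eqref{eq: contraction of ground state Richardson}, the rank-growth factor $R^{(t)}=r_\bA^{(t)}+1$ (or $r_\bA^{(t)}$ under the identity condition) from~\eqref{eq: rank estimate for groundstate SD}, and the rescaling construction~\eqref{qe: rescaled rank-one starting point} --- in the discussion immediately preceding the theorem and then simply invokes the abstract bound~\eqref{eq:simplified estimate for t-rank}, which is exactly your plan. Your side remark on why the rank-growth factor drops from $r_\bA^{(t)}+2$ to $r_\bA^{(t)}+1$ (the eigenvector iteration~\eqref{eq: Richardson iteration for ground state} has no inhomogeneous term $\bb$) is also the correct explanation.
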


A notable difference of Theorem~\ref{th: EV problems} 
to Theorem~\ref{th: bound from steepest descent for linear equations} is that it features the quantity $\pi^{(t)}_1(\bu)$ in the estimate. This quantity 
measures the distance between $\bu$ and the set of $t$-rank one tensors within the affine space $\bu + \langle \bu \rangle^\bot$. In this way, the problem of rank-$r$ approximability has been reduced to the problem of rank-one approximability.

\subsection{The problem of $t$-rank one approximability}

In this section, we derive upper bounds for the quantity $\pi^{(t)}_1(\bu)$ defined in~\eqref{eq:defpi}.
Trivially, every starting point $\bu_0 \in \domain$ of $t$-rank one yields the upper bound
$\|\bu_0 - \bu\|_{\bH}$. While this is of interest when considering a specific iteration,
more insight would be gained from bounds that depend on $\delta$, $\Delta$, and $r_\bA^{(t)}$ only.
Deriving such bounds is surprisingly difficult and at the heart of related works on the entanglement entropy, see, e.g.,~\cite{Arad2012}.


In an infinite-dimensional tensor product space $\bH$, the ratio $\pi^{(t)}_1(\bu) / \| \bu \|_\bH$ may become arbitrarily large
for arbitrary $\bu \in \bH$.
Upper bounds are obtained from $t$-rank one approximations to $\bu$ in the $\bH$-norm. Specifically, considering~\eqref{qe: rescaled rank-one starting point} with $\|\hbu_0\|_\bH = 1$, we get the estimate
\begin{equation*}
 \pi^{(t)}_1(\bu) \le \| \bu_0 -\bu \|_\bH \le \|\bu_0\|_\bH =  \frac{\| \bu \|_\bH}{\abs{\left \langle \frac{\bu}{\| \bu\|_\bH} , \hbu_0 \right\rangle_{\bH}}} ,
\end{equation*}
where we used that $\bu_0 -\bu$ is orthogonal to $\bu$.
Thus, the problem is further reduced to providing a lower bound on the overlap of the normalized eigenvector with normalized tensors of $t$-rank one:
\begin{equation}\label{eq: reduction to overlap}
\pi_1^{(t)}(\bu) \le \frac{\| \bu \|_\bH}{\theta_1^{(t)}(\bu)},
\end{equation}
where
\begin{equation}\label{definition of theta}
\theta_1^{(t)}(\bu) := \sup_{\substack{\rank^{(t)} (\hbu_0) = 1 \\ \| \hbu_0 \|_\bH = 1}} \left \langle \frac{\bu}{\| \bu\|_\bH} , \hbu_0 \right\rangle_{\bH}.
\end{equation}

In the case that every $H_\mu$ has finite dimension $N_\mu$, $\mu=1,\dots,d$, a generic bound is obtained as follows. The singular value decomposition~\eqref{eq:SVD} of the solution with respect to the identification~\eqref{eq: identification} is a finite sum with
\[
D^{(t)} = \min\bigg( \prod_{\mu \in t} N_\mu, \prod_{\nu \notin t} N_\nu \bigg)
\]
mutually orthogonal $t$-rank one tensors of decreasing norms $\sigma_k^{(t)}$. This implies that the overlap~\eqref{definition of theta}
is at least $\sigma_1^{(t)} / \| \bu\|_\bH \ge 1/\sqrt{D^{(t)}}$. By~\eqref{eq: reduction to overlap}, we obtain
\begin{equation}\label{eq: naive rank-one overlap estimate}
\pi^{(t)}_1(\bu) \le \sqrt{ D^{(t)} } \| \bu \|_\bH.
\end{equation}
This bound is independent of $d$ only when the cardinality of $t$ does not grow, which is the case for 
the Tucker format~\cite{Hackbusch2012}. The tensor train and hierarchical Tucker formats, however, require to take
large splittings like $t = \{1,\dots,d/2\}$ into consideration. Consequently, the bound~\eqref{eq: naive rank-one overlap estimate} grows exponentially with $d$. In~\cite{Arad2012}, one of the very few results on this question, it has been shown how this growth can be avoided in the case of frustration-free systems. This constitutes a rather limiting assumption.
The following result adapts a technique from~\cite[Lemma III.2]{Arad2012},
which does not require this assumption but instead assumes a rather strong contraction relative to the rank growth.
\begin{theorem}\label{th: estimate the overlap}
With the notation introduced in Theorem~\ref{th: EV problems}, assume that
$
 q^2 \mult^{(t)} < 1.
$
Then it holds
\[
\big(\theta_1^{(t)} (\bu)\big)^2 \ge \frac{1}{2} \bigg( \frac{1}{\mult^{(t)}} \bigg)^{\Big\lceil \frac{- \ln 2}{\ln\left(q^2 \mult^{(t)}\right)} \Big\rceil}
\]
for $\theta_1^{(t)} (\bu)$ defined in~\eqref{definition of theta}. Consequently, by~\eqref{eq: decay rate for EV} and~\eqref{eq: reduction to overlap},
\[
\tau_r^{(t)}(\bu) \le \sqrt{2} (\mult^{(t)})^{\frac{1}{2}\Big\lceil \frac{- \ln 2}{\ln\left(q^2 \mult^{(t)}\right)} \Big\rceil} \| \bu \|_\bH \left( \frac 1 r \right)^{\abs{\frac{ \ln q}{\ln \mult^{(t)}}}}.
\]
\end{theorem}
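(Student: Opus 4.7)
The plan is to run the Richardson iteration~\eqref{eq: Richardson iteration for ground state} forward from a rank-one seed and then extract a lower bound on $\theta_1^{(t)}(\bu)$ by applying Cauchy--Schwarz to the Schmidt decomposition of the normalized iterate. Writing $R := \mult^{(t)}$ for brevity, I would start from an arbitrary unit $t$-rank-one tensor $\hbu_0$ with $\alpha := \langle \bu, \hbu_0\rangle_\bH/\|\bu\|_\bH > 0$. Because $\bI-\beta\bA_{\lambda_1}$ fixes $\bu$ and has operator norm at most $q = (1-\Delta)/(1+\Delta)$ on $\langle\bu\rangle^\bot$, the iterate $\bu_n := (\bI-\beta\bA_{\lambda_1})^n\hbu_0$ satisfies
\[
\langle \bu_n, \bu\rangle_\bH = \alpha\,\|\bu\|_\bH, \qquad \|\bu_n\|_\bH^2 \le \alpha^2 + q^{2n}(1-\alpha^2),
\]
while $\rank^{(t)}(\bu_n) \le R^n$ by iterating~\eqref{eq: rank estimate for groundstate SD}.

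Next, I would normalize to $\hbu_n := \bu_n/\|\bu_n\|_\bH$, a unit tensor of $t$-rank at most $R^n$ with overlap $\alpha/\|\bu_n\|_\bH$ against $\bu/\|\bu\|_\bH$. Expanding $\hbu_n = \sum_{k=1}^K \sigma_k'\,u_k'\otimes v_k'$ in its Schmidt decomposition with respect to the $t$-bipartition (where $K\le R^n$, $\sum_k(\sigma_k')^2=1$, and each $u_k'\otimes v_k'$ is unit $t$-rank-one), the Cauchy--Schwarz inequality combined with the pointwise bound $|\langle u_k'\otimes v_k', \bu/\|\bu\|_\bH\rangle_\bH| \le \theta_1^{(t)}(\bu)$ from the definition~\eqref{definition of theta} gives
\[
\left(\frac{\alpha}{\|\bu_n\|_\bH}\right)^2 \le \sum_{k=1}^K \big|\langle u_k'\otimes v_k', \bu/\|\bu\|_\bH\rangle_\bH\big|^2 \le R^n\,\big(\theta_1^{(t)}(\bu)\big)^2.
\]

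Plugging in the Pythagorean bound on $\|\bu_n\|_\bH^2$ and then taking the supremum over $\hbu_0$ to send $\alpha$ up to $\theta_1^{(t)}(\bu)$, these two inequalities rearrange to
\[
\frac{1}{R^n} \le \big(\theta_1^{(t)}(\bu)\big)^2 (1-q^{2n}) + q^{2n}.
\]
The choice $n = \lceil -\ln 2/\ln(q^2 R)\rceil$ is precisely what makes $q^{2n}R^n \le 1/2$, so that $1/R^n - q^{2n} \ge 1/(2R^n)$ and the claimed bound $\big(\theta_1^{(t)}(\bu)\big)^2 \ge 1/(2R^n)$ falls out after dividing by $1-q^{2n} \le 1$. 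The estimate for $\tau_r^{(t)}(\bu)$ is then immediate from~\eqref{eq: reduction to overlap} and~\eqref{eq: decay rate for EV}.

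The only non-routine step is the Cauchy--Schwarz trade-off in the second paragraph: one gains a factor of $R^n$ in the overlap bound because $\hbu_n$ has $t$-rank at most $R^n$, while losing only a factor of $q^{2n}$ in how close $\bu_n$ lies to the line through $\bu$. The assumption $q^2\mult^{(t)}<1$ is exactly what guarantees the existence of some $n$ for which the contraction beats the rank growth, and hence for which the resulting lower bound is non-trivial. This is essentially the same mechanism as in~\cite[Lemma~III.2]{Arad2012} for frustration-free Hamiltonians.
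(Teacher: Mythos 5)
Your proof is correct and reproduces the paper's argument essentially verbatim: same seeding with a $t$-rank-one $\hbu_0$, same exploitation of the invariance of the overlap $\langle\bu_n,\bu\rangle_\bH$ under the Richardson step, same Pythagorean contraction bound on $\|\bu_n\|_\bH^2$, same Cauchy--Schwarz trade-off against the Schmidt decomposition of the iterate, and the same choice $n = \lceil -\ln 2/\ln(q^2\mult^{(t)})\rceil$. The only cosmetic differences are that you normalize $\hbu_n$ before decomposing it (the paper keeps the unnormalized iterate and divides by $\|\hbu_n\|_\bH^2$ afterward) and that you pass to the supremum over seeds directly rather than via the paper's explicit $\epsilon$-argument; both routes lead to the identical rearranged inequality $R^{-n}\le \theta^2(1-q^{2n})+q^{2n}$.
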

\begin{proof}
Without loss of generality, we may assume $\| \bu \|_\bH = 1$. Let $P$ denote the orthogonal projection onto $\langle \bu \rangle$. To simplify the notation, we write $\theta$ instead of $\theta_1^{(t)}(\bu)$.

Let $\epsilon > 0$ and $\hbu_0$ be an normalized rank-one tensor with $\| P \hbu_0 \|_\bH = \langle \bu , \hbu_0 \rangle_{\bH} \ge \theta - \epsilon$. We let $\hbu_n$ denote the iterate obtained after $n$ steps of the Richardson method~\eqref{eq: Richardson iteration for ground state} with starting vector $\hbu_0$. Since $\hat \bu_0 \in P \hat \bu_0+ \langle \bu \rangle^\bot$, this rescaled
Richardson method converges to $P \hat \bu_0 \not=0$ and, by induction,
\begin{equation}\label{eq: constant overlap}
P \hbu_n = P \hbu_0.
\end{equation}
By~\eqref{eq: contraction of ground state Richardson} and using $\| \hbu_0 \|_\bH = 1$,
\[
\|(I - P)\hbu_n \|_\bH^2 \le q^{2n} \|(I - P) \hbu_0 \|_\bH^2 = q^{2n}(1 - \| P \hbu_0\|_\bH^2).
\]
Hence,
\begin{align}\label{eq: norm of hbun}
\| \hbu_n \|_\bH^2
&= \| P \hbu_n \|_\bH^2 + \| (I - P) \hbu_n \|_\bH^2 = \| P \hbu_0 \|_\bH^2 + \| (I - P) \hbu_n \|^2_\bH\notag \\
&\le \| P\hbu_0 \|_\bH^2 + q^{2n}(1 - \| P \hbu_0\|_\bH^2)\notag \\
&\le \theta^2 +  q^{2n} (1 - (\theta - \epsilon)^2),
\end{align}
where we used that $\| P \hbu_0 \|_{\bH} \le \theta$ by definition~\eqref{definition of theta} of $\theta$.

Using the singular value decomposition, we can write
\[
\hbu_n = \sum_{k=1}^{\rank^{(t)}(\hbu_n)} \sigma_k \bv_k,
\]
with mutually orthonormal $t$-rank one tensors $\bv_k$. By the Cauchy-Schwarz inequality,
\[
(\theta - \epsilon)^2  \le \abs{\langle \bu, \hbu_0 \rangle_\bH}^2 = \abs{\langle \bu, \hbu_n \rangle_\bH}^2  \le \bigg(\sum_{k=1}^{\rank^{(t)}(\hbu_n)} \abs{ \langle \bu, \bv_k \rangle_\bH }^2 \bigg) \| \hbu_n \|_\bH^2,
\]
where the equality follows from~\eqref{eq: constant overlap}. As $\rank^{(t)}(\hbu_n) \le (\mult^{(t)})^n$, we conclude using~\eqref{eq: norm of hbun} that 
\begin{equation*}
\theta^2 \ge | \langle \bu, \bv_k \rangle_\bH |^2 \ge \frac{(\theta - \epsilon)^2}{(\mult^{(t)})^n\| \hbu_n \|_{\bH}^2} \ge \frac{(\theta - \epsilon)^2}{(\mult^{(t)})^n(\theta^2 + q^{2n} (1 - (\theta - \epsilon)^2))}
\end{equation*}
holds for at least one $k$.
Note that the first inequality again is due to the definition of $\theta$. As $\epsilon$ can be chosen arbitrary, we obtain
\[
(\mult^{(t)})^n(\theta^2 + q^{2n} (1 - \theta^2)) \ge 1,
\]
or, equivalently,
\begin{equation}\label{eq:equivalently}
\theta^2 (1 - q^{2n}) \ge (\mult^{(t)})^{-n} - q^{2n} = (\mult^{(t)})^{-n} (1 -  (q^2\mult^{(t)})^n).
\end{equation}

For $n \ge \frac{-\ln 2}{\ln(q^2\mult^{(t)})}$, which is positive by assumption, we have $(q^2\mult^{(t)})^n \le 1/2$. Then~\eqref{eq:equivalently} implies
\begin{equation}\label{eq:final estimate}
\theta^2 \ge \frac{1}{(\mult^{(t)})^{n}} \frac{1 - (q^2\mult^{(t)})^n}{1 - q^{2n}} \ge \frac{1}{2(\mult^{(t)})^{n}}.
\end{equation}
The assertion follows by choosing $n = \left\lceil \frac{- \ln 2}{\ln(q^2\mult^{(t)})} \right\rceil$.
\end{proof}
Note that better bounds on $\theta$ may be obtained from~\eqref{eq:final estimate} by estimating the maximum value of the middle term as a function of $n$ more carefully, but this quickly becomes clumsy.

The proof of Theorem~\ref{th: estimate the overlap} is based on the intuition that the ratio between the energy contraction rate $q^{2n}$ and the reciprocal rank increase $1/(\mult^{(t)})^n$ after $n$ steps of the Richardson iteration can be made arbitrarily small when $q^2 \mult^{(t)} < 1$. Interestingly, this assumption alone does not result in better singular value decays in any of the above theorems, as only the ratio of the logarithms enters.
The consideration of several steps of the fixed-point iteration only pays off when improved estimates of $\mult^{(t)}$ are available,
as discussed for linear systems \change{at the end of Section~\ref{sec:linearequations}}. An example of relevance to eigenvalue problems is given, for instance, by an operator of the form
\[
\bA = A_1 \otimes I + I \otimes A_2 + B \otimes C,
\]
see also Example~\ref{example: nearest neighbor interaction}.
A direct calculation reveals that for such an operator
two steps of steepest descent~\eqref{eq: Richardson iteration for ground state} do not increase the rank by a factor of $3^2 = 9$, but only by at most $6$.


\section{Conclusions}

We have established bounds on the singular value decays for solutions to tensor structured linear systems and eigenvalue problems. As these decays
govern the low-rank approximability in various low-rank tensor formats, such as the tensor train and the hierarchical Tucker formats,
our results allow to make a priori statements about the suitability of these formats to address a given application, possibly even for large orders $d$.

With the assumptions made in this paper, our construction yields algebraic decays. To obtain exponential decays, as they are sometimes
observed in practice, further assumptions may be needed. In \change{Section~\ref{sec:linearequations}}, a rather restrictive commutativity assumption
is shown to yield exponential decays. It would certainly be of interest to identify less restrictive assumptions.

\section*{Acknowledgment}

We thank Markus Bachmayr and Bart Vandereycken for inspiring discussions on an earlier draft of this paper, which resulted in some valuable improvements.

\bibliographystyle{plain}      
\bibliography{LowRankApprox}   


\end{document}